\documentclass{article}

\usepackage{arxiv}

\usepackage[utf8]{inputenc} 
\usepackage[T1]{fontenc}    
\usepackage{url}            
\usepackage{booktabs}       
\usepackage{amsfonts}       
\usepackage{nicefrac}       
\usepackage{microtype}      
\usepackage{lipsum}
\usepackage{graphicx}
\graphicspath{ {./images/} }

\usepackage{babel}
\usepackage{amsmath}
\usepackage{amsfonts}
\usepackage{amssymb,amsmath}
\usepackage[T1]{fontenc}  
\usepackage{hhline}
\usepackage{fancyhdr}
\usepackage{lmodern}
\usepackage{mathabx}
\usepackage{mathrsfs}
\usepackage{calrsfs}
\usepackage[hidelinks]{hyperref}
\usepackage{tcolorbox}
\usepackage{calc}
\usepackage{dsfont}
\usepackage{color}
\usepackage{amsthm}
\usepackage{mathtools}
\usepackage{enumitem}
\usepackage{xcolor}
\usepackage{geometry}
\usepackage{bm}
\usepackage{microtype}
\usepackage[round]{natbib}

\newcommand{\mb}{\mathbb}

\newtheorem{theorem}{Theorem}[section]

\theoremstyle{definition}

\newtheorem{remark}{Remark}[section]

\title{Orthogonal parametrisations of Extreme-Value distributions}

\author{
Nathan Huet \\
  Ca’ Foscari University of Venice\\
  Venice, Italy \\
  \texttt{nathan.huet@unive.it}
   \And
 Ilaria Prosdocimi \\
  Ca’ Foscari University of Venice\\
  Venice, Italy \\
  \texttt{ilaria.prosdocimi@unive.it}
}

\begin{document}
\maketitle
\begin{abstract}
Extreme value distributions 
are routinely employed to assess risks connected to extreme events in a large number of applications. 
They typically are two- or three- parameter distributions: the inference can be unstable, which is particularly problematic given the fact that often times these distributions are fitted to small samples. 
Furthermore, the distribution's parameters are generally not directly interpretable and not the key aim of the estimation. 
We present several orthogonal reparametrisations of the main extreme-value distributions, key in the modelling of rare events. In particular, we apply the theory developed in \cite{cox1987parameter} to the Generalised Extreme-Value, Generalised Pareto, and Gumbel distributions. 
We illustrate the principal advantage of these reparametrisations in a simulation study.

\end{abstract}




\section{Introduction}
Being able to appropriately model the distribution of rare events is central in many areas, such as in environmental sciences, to better manage their risks and societal impacts. The two key probabilistic models suited for the analysis of extreme events are the Generalised Extreme-Value (GEV) distribution, to model the maxima over disjoint blocks of observations, and the Generalised Pareto (GP) distribution, to model observations above a high threshold \citep{Coles2001}. The estimation of their parameters is however a recurrent challenge due to their high sensibility to outliers, which is linked to the restrictive sample size inherent at the nature of the problem, and because of the strong interactions between the different parameters. 
We propose in this article a reparametrisation of the GEV distribution based on the framework developed in \cite{cox1987parameter}, whose main advantage is the reduction of correlation between parameter estimates. Similar reparametrisations have been successfully applied to the GP distribution \citep{chavez2005generalized} and the Weibull distribution \citep{hartmann2019laplace}. In both cases, the orthogonal parametrisation has demonstrated improved performance, for example, in terms of faster convergence of MCMC algorithms for approximation of posterior distributions in a Bayesian setting \citep[see, respectively,][]{moins2023bayesian,tanskanen2018aspects}. Deriving these orthogonal parametrisations requires knowing the elements of the Fisher information matrix for the distribution: for extreme values distributions these results are scattered and not all fully available in the literature. In this article we collate all these results in a unique place, providing a useful reference for those seeking to perform inference for extremes.

The article is organized as follows. Section~\ref{sec:background} presents background notions of Extreme-Value Theory and the orthogonal parametrization exposed in \citep{cox1987parameter}. Section~\ref{sec:main} proposes the different reparametrizations for the Gumbel and the $2$-parameter GEV distribution. Section~\ref{sec:simu} illustrates the main properties of the reparametrisations through a simulation study.

\section{Preliminaries}\label{sec:background}

\subsection{Extreme Value distributions}

The probability density function of the GEV distribution is given by
\begin{equation}\label{eq:gev_dens}
    f^{GEV}(x;\mu,\sigma,\xi) = \frac{1}{\sigma}\Big(1+\xi\Big(\frac{x-\mu}{\sigma}\Big)\Big)_+^{-(\xi+1)/\xi}\exp\Big\{-\Big(1+\xi\Big(\frac{x-\mu}{\sigma}\Big)^{-1/\xi}\Big)_+\Big\},
\end{equation}
where $(\mu,\sigma,\xi) \in \mb{R}\times ]0,+\infty[\times \mb{R}$ denote the location, scale and shape parameters, respectively, and for $x \in [\mu - \sigma/\xi,+\infty[$ if $\xi >0$ and for $x \in ]-\infty,\mu - \sigma/\xi]$ if $\xi <0$. The limit case when $\xi \rightarrow 0$ corresponds to the Gumbel distribution whose density is given by
\begin{equation*}
    f^{Gumbel}(x;\mu,\sigma) = \frac{1}{\sigma}\exp\Big(-\Big(\frac{x-\mu}{\sigma}\Big)\Big)\exp\Big\{-\exp\Big(-\Big(\frac{x-\mu}{\sigma}\Big)\Big)\Big\},
\end{equation*}
for $x \in \mb{R}$.

\subsection{Orthogonal parametrisation}\label{sec:ortho_param}
In this paper, the terminology \textit{orthogonal parametrisation} refers to the reparametrisation presented in \cite{cox1987parameter} who propose a general framework yielding orthogonality properties in the Fisher information matrix. For more information about this procedure, and its convenient properties, the reader is invited to consult the original paper, and its rich discussion. Here we recall only some basic notions. 

Suppose that initially a log-likelihood $l^*$ is specified in terms of parameters $(\psi,\theta_1,...,\theta_p):=(\psi,\bm{\theta}) \in \mb{R}\times\mb{R}^p$. The first parameter $\psi$ is typically the parameter of interest which should remain unchanged under reparametrisation. The other parameters are sometimes referred to as nuisance parameters, highlighting the fact that the main object of the inference is $\psi$. 
A reparametrisation $(\psi,\bm{\lambda}) \mapsto \tilde{\bm{\theta}}(\psi,\bm{\lambda})$ is defined through the relation $l^*(\psi,\Tilde{\bm{\theta}}(\psi,\bm{\lambda})) = l(\psi,\bm{\lambda})$, where $l$ denotes the log-likelihood expressed in the new parametrisation. This parametrisation is called \textit{orthogonal} if, for all $j\in{1,...,p}$, the Fisher cross-information between $\psi$ and $\lambda_j$ is zero:
\begin{equation*}
\mb{E}\bigg[\frac{\partial^2l(\psi,\bm{\lambda})}{\partial \psi \partial \lambda_j}\bigg] = 0.
\end{equation*}


\noindent If the transformation $\Tilde{\bm{\theta}}$ has a nonzero Jacobian, then, by the chain rule, this implies that, for all $j \in \{1,...,p\}$

\begin{equation}\label{eq:orthogonal_pde}
    \sum_{i=1}^p i^*_{\Tilde{\theta}_i\Tilde{\theta}_j} \frac{\partial\Tilde{\theta}_i(\psi, \bm{\lambda})}{\partial\psi} = - i^*_{\psi\Tilde{\theta}_j},
\end{equation}
 and where the $i_{\cdot\cdot}^*$'s denote the elements of the Fisher information matrix in the original settings.

\begin{remark}
    The notion of orthogonality considered here, in the sense of \citet{cox1987parameter}, is only local. That is, after the reparametrisation, the parameter $\psi$ becomes orthogonal to the new parameters $\lambda_1,\ldots,\lambda_p$, however there is no guarantee that the parameters $\lambda_1,\ldots,\lambda_p$ are mutually orthogonal.
\end{remark}

\section{Orthogonal reparametrisations for Extreme-Value distributions}\label{sec:main}

We present our main contributions in this section. Our objective is to propose convenient orthogonal reparametrisations for the GEV distribution. Since the derivation of a general orthogonal reparametrisation for the full three-parameter distribution appears to be intractable (see Remark~\ref{rem:3-GEV}), we focus instead on two special two-parameter cases.
In Section~\ref{sec:gumbel}, we derive reparametrisations for the Gumbel distribution, which is the limit distribution of the GEV distribution when $\xi \rightarrow 0$.
In Section~\ref{sec:2-GEV}, we propose to study the special case when $\xi \ne 0$ and the lower or upper finite bound of the distribution is known. This assumption can be justified by physical considerations, as discussed, in, e.g., \cite{zhang2023upper}, in the context of temperature extremes.
Knowledge of this bound reduces the model to a two-parameter GEV distribution \eqref{eq:gev_dens}, for which an explicit orthogonal reparametrisation can be obtained. The orthogonality of the parametrisations provided in this section is illustrated in Section~\ref{sec:simu}. All proofs of the results presented in this section are deferred to Appendix~\ref{sec:appendix_proofs}.


\subsection{The Gumbel distribution}\label{sec:gumbel}
Following the approach presented in Section~\ref{sec:ortho_param}, to obtain a representation with orthogonal parameters for the Gumbel distribution where the scale parameter $\sigma$ is the parameter of interest and the location parameter $\mu$ is transformed, we seek a reparametrisation $(\nu,\sigma) \mapsto (\Tilde{\mu}(\nu,\sigma),\sigma)$ such that
\begin{equation*}
    \Tilde{f}(x;\nu,\sigma) = f(x;\Tilde{\mu}(\nu,\sigma),\sigma),
\end{equation*}
and such that the Fisher information between $\nu$ and $\sigma$ according to $\Tilde{f}$ is equal to zero. 
Hence, the transformation has to satisfy
\begin{equation*}
    i_{\Tilde{\mu} \Tilde{\mu}}\frac{\partial \Tilde{\mu}(\nu,\sigma)}{\partial \sigma} = - i_{\Tilde{\mu}\sigma}.
\end{equation*}
The resulting reparametrisation, as well as the one obtained by taking $\mu$ as the parameter of interest and transforming $\sigma$, are presented in the following theorem.
\begin{theorem}[Orthogonal reparametrisation of the Gumbel distribution]\label{thm:Gumbel}
    An orthogonal reparametrisation of the Gumbel distribution is given as follows
\begin{enumerate}
    \item with transformed location parameter $\mu$,  $(\nu,\sigma) \mapsto (\Tilde{\mu}(\nu,\sigma),\sigma)$, with
    \begin{equation*}
        \Tilde{\mu}(\nu,\sigma) = (1- \gamma)\sigma + C_1(\nu),
    \end{equation*}
    where $\gamma$ denotes the Euler-Mascheroni constant and where $C_1(\nu)$ is independent of $\sigma$.
    \item with transformed scale parameter $\sigma$, $(\mu,\rho) \mapsto (\mu,\Tilde{\sigma}(\mu,\rho)$, with
    \begin{equation*}
        \Tilde{\sigma}(\mu,\rho) = \frac{1- \gamma}{\pi^2/6+\gamma^2 -2\gamma+1}\mu + C_2(\rho),
    \end{equation*}
    where $C_2(\rho)$ is independent of $\mu$.
\end{enumerate}
\end{theorem}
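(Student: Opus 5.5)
The plan is to compute the Fisher information matrix of the Gumbel distribution in the $(\mu,\sigma)$ parametrisation, plug its entries into the orthogonality equation \eqref{eq:orthogonal_pde}, and integrate the resulting linear PDE in each case. Write $z=(x-\mu)/\sigma$, so that $l^*(\mu,\sigma)=-\log\sigma-z-e^{-z}$ for a single observation. The key device is that $W=e^{-z}$ follows an $\mathrm{Exp}(1)$ distribution when $X$ is Gumbel. Hence every needed expectation reduces to moments of the form $\mathbb{E}[W^a(\log W)^b]$, which are derivatives of $\Gamma$ at integer arguments. In particular
\[
\mathbb{E}[W]=1,\qquad \mathbb{E}[W\log W]=\Gamma'(2)=1-\gamma,\qquad \mathbb{E}[W(\log W)^2]=\Gamma''(2)=\frac{\pi^2}{6}+(1-\gamma)^2-1 .
\]

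First I differentiate: $\partial_\mu l^*=(1-e^{-z})/\sigma$, and I use $\partial_\sigma z=-z/\sigma$ and $\partial_\mu z=-1/\sigma$ to obtain the second derivatives. Taking expectations with the Exponential identities gives
\[
i^*_{\mu\mu}=\frac{1}{\sigma^2},\qquad i^*_{\mu\sigma}=-\frac{1-\gamma}{\sigma^2},\qquad i^*_{\sigma\sigma}=\frac{\pi^2/6+(1-\gamma)^2}{\sigma^2}.
\]
For $i^*_{\mu\sigma}$ I will check the sign carefully. The term $\mathbb{E}[1-e^{-z}]$ vanishes, and the remaining term is $-\mathbb{E}[z e^{-z}]/\sigma^2=\mathbb{E}[W\log W]/\sigma^2=(1-\gamma)/\sigma^2$ for the expected Hessian; negating gives the information. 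For $i^*_{\sigma\sigma}$ I will use the score form, $\mathbb{E}[(\partial_\sigma l^*)^2]$ with $\partial_\sigma l^*=(-1+z-ze^{-z})/\sigma$. Expanding the square requires $\mathbb{E}[z]=\gamma$, $\mathbb{E}[z^2]=\pi^2/6+\gamma^2$, $\mathbb{E}[z^2e^{-z}]$ and $\mathbb{E}[z^2e^{-2z}]$, each of which is a $\Gamma$-derivative. Collecting these terms into $\pi^2/6+\gamma^2-2\gamma+1$ is the only genuinely tedious step, and it is where I expect errors to creep in. As a cross-check I will compare with the well-known Gumbel information matrix.

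For part 1, $\sigma$ is the parameter of interest. Equation \eqref{eq:orthogonal_pde} with $p=1$ reads $i^*_{\mu\mu}\,\partial_\sigma\tilde\mu=-i^*_{\mu\sigma}$, so $\partial_\sigma\tilde\mu=1-\gamma$; the factors $\sigma^{-2}$ cancel, which is why the PDE has constant coefficients. Integrating in $\sigma$ gives $\tilde\mu=(1-\gamma)\sigma+C_1(\nu)$. For part 2, $\mu$ is the parameter of interest, and the equation $i^*_{\sigma\sigma}\,\partial_\mu\tilde\sigma=-i^*_{\sigma\mu}$ gives $\partial_\mu\tilde\sigma=(1-\gamma)/(\pi^2/6+\gamma^2-2\gamma+1)$. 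Integrating in $\mu$ yields the stated form with $C_2(\rho)$.

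Finally, I note that any $C_1$ or $C_2$ with nonzero derivative gives a nonzero Jacobian, so the reparametrisation is admissible. The converse direction is then immediate: applying the chain rule to $l(\sigma,\nu)=l^*(\tilde\mu(\nu,\sigma),\sigma)$ shows that the cross-information is $C_1'(\nu)\bigl(i^*_{\mu\mu}(1-\gamma)+i^*_{\mu\sigma}\bigr)=0$, and the analogous computation holds for part 2.
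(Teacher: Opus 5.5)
Your proposal is correct and follows essentially the same route as the paper. You compute the Gumbel Fisher information, with $i_{\mu\mu}=1/\sigma^2$, $i_{\mu\sigma}=(\gamma-1)/\sigma^2$ and $i_{\sigma\sigma}=(\pi^2/6+(1-\gamma)^2)/\sigma^2$; your substitution $W=e^{-z}\sim\mathrm{Exp}(1)$ is exactly the paper's identity $\mathbb{E}[z^n e^{-kz}]=(-1)^n\Gamma^{(n)}(k+1)$. You then insert these into \eqref{eq:orthogonal_pde} and integrate the resulting constant-coefficient equations, and your closing chain-rule and Jacobian check makes explicit the sufficiency direction that the paper leaves implicit.
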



\subsection{The two-parameter GEV distribution}\label{sec:2-GEV}

To reduce the GEV distribution to a two-parameter form, we propose fixing $B$, the lower bound, in case $\xi >0$, or the upper bound, in case $\xi <0$. The case $\xi = 0$ is treated in the previous section. The following derivations are independent of the choice of the value of $B$. For simplicity, we set $B=0$, which fixes the location parameter $\mu = \sigma/\xi$. We denote this distribution by $GEV_2(\sigma,\xi)$. Its density $f^{GEV_{2}}$ is given by
\begin{equation*}
    f^{GEV_{2}}(x;\sigma,\xi) = \frac{1}{\sigma}\Big(\frac{\xi x}{\sigma}\Big)^{-1-1/\xi} \exp\Big\{-\Big(\frac{\xi x}{\sigma}\Big)^{-1/\xi}\Big\},
\end{equation*}
for $x> 0$ if $\xi >0$, and for $x<0$ if $\xi <0$.

To compute the desired parametrisation, the Fisher information of the $GEV_2$ are computed first and presented in \ref{sec:GEV2_fisher}.

No explicit reparametrisation can be derived for the $GEV_2$ distribution when the scale parameter $\sigma$ is kept as the parameter of interest, due to the complex form of $i_{\xi\xi}$. The reparametrisation with the shape parameter $\xi$ unchanged and the scale parameter $\sigma$ transformed is presented in the following theorem.

\begin{theorem}[Orthogonal reparametrisation of the two-parameter GEV distribution]\label{thm:2-GEV} An orthogonal reparametrisation of the $GEV_2$ distribution, with transformed scale parameter $\sigma$, is given by
\begin{equation*}
    \Tilde{\sigma}(\rho,\xi) = C(\rho)\xi\exp\big((1-\gamma)\xi\big),
\end{equation*}
    where $C(\rho)$ is independent of $\xi$.

\end{theorem}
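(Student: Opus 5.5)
The plan is to specialise the general orthogonality equation \eqref{eq:orthogonal_pde} to the two-parameter case. Here the parameter of interest is $\psi=\xi$ and there is a single nuisance parameter $\theta_1=\sigma$, to be replaced by $\tilde\sigma(\rho,\xi)$. With $p=1$, \eqref{eq:orthogonal_pde} becomes the scalar first-order ODE in $\xi$ (with $\rho$ entering only as the integration constant)
\begin{equation*}
    i_{\tilde\sigma\tilde\sigma}\,\frac{\partial \tilde\sigma(\rho,\xi)}{\partial \xi} = - i_{\xi\tilde\sigma}.
\end{equation*}
Everything therefore reduces to computing $i_{\sigma\sigma}$ and $i_{\sigma\xi}$ for $GEV_2(\sigma,\xi)$, which are the entries announced in \ref{sec:GEV2_fisher}, and then integrating. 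We never need $i_{\xi\xi}$, which is why this direction is tractable while the other is not.

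To compute the two entries, I would use the substitution $Y=(\xi x/\sigma)^{-1/\xi}$. Under $GEV_2(\sigma,\xi)$ this variable is standard exponential, exactly as $(1+\xi(x-\mu)/\sigma)^{-1/\xi}$ is for the full GEV. Using $\log(\xi x/\sigma)=-\xi\log Y$, the log-density takes the compact form
\begin{equation*}
    \log f = -\log\sigma + (1+\xi)\log Y - Y.
\end{equation*}
The two derivatives of $\log Y$ that we need are
\begin{equation*}
    \partial_\sigma \log Y = \frac{1}{\xi\sigma}, \qquad \partial_\xi \log Y = -\frac{\log Y+1}{\xi}.
\end{equation*}
Differentiating then gives the $\sigma$-score $(1-Y)/(\xi\sigma)$, and hence $i_{\sigma\sigma}=\mathrm{Var}(Y)/(\xi\sigma)^2=1/(\xi^2\sigma^2)$.

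For the cross term, I would differentiate the $\sigma$-score in $\xi$ and take expectations. This only requires the exponential moments $\mathbb{E}[Y]=1$ and $\mathbb{E}[Y\log Y]=\psi(2)=1-\gamma$, where $\psi$ is the digamma function; this is where the Euler–Mascheroni constant enters. The result has the form $i_{\sigma\xi}=-c(\xi)/(\xi^2\sigma)$, with $c(\xi)$ built from $1-\gamma$ and possibly a term in $1/\xi$ coming from the explicit $\xi$ in $\xi x/\sigma$.

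Substituting into the ODE, the factors $\xi^2$ cancel and the $\sigma$-dependence reduces to $\partial_\xi\log\tilde\sigma = c(\xi)$. To match the statement we need $c(\xi)=1/\xi+(1-\gamma)$. Integrating then gives $\log\tilde\sigma=\log\xi+(1-\gamma)\xi+\log C(\rho)$, that is, $\tilde\sigma(\rho,\xi)=C(\rho)\,\xi\,e^{(1-\gamma)\xi}$. Finally, one checks that the Jacobian $\partial\tilde\sigma/\partial\rho = C'(\rho)\,\xi\,e^{(1-\gamma)\xi}$ is nonzero whenever $C$ is strictly monotone and $\xi\neq0$, so \eqref{eq:orthogonal_pde} is indeed equivalent to orthogonality.

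I expect the main obstacle to be the bookkeeping in $i_{\sigma\xi}$. The support endpoint is tied to $\mu=\sigma/\xi$, so $\xi$ enters both through the exponent $-1/\xi$ and through the factor $\xi x/\sigma$. A careless derivative easily produces $\partial_\xi\log\tilde\sigma=2-\gamma$ rather than $1/\xi+1-\gamma$; indeed, my quick pass through the chain rule above gives $2-\gamma$. I would therefore recompute $\partial_\xi\log f$ directly in the original variable $x$, separating the contributions from $\log\xi$ and from the exponent, and cross-check the result against the full three-parameter GEV information matrix restricted to the curve $\mu=\sigma/\xi$.
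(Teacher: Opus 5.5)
Your plan is the paper's proof: specialise \eqref{eq:orthogonal_pde} to $p=1$, insert $i_{\sigma\sigma}$ and $i_{\sigma\xi}$ from Appendix~\ref{sec:GEV2_fisher}, and integrate. Your $\sigma$-score and $i_{\sigma\sigma}=1/(\xi\sigma)^2$ are right. The gap is the cross-information, which is the only nontrivial input, and you never actually obtain it. The one derivative you write down for it, $\partial_\xi\log Y=-(\log Y+1)/\xi$, is wrong. With it you get $c(\xi)=2-\gamma$, i.e.\ $\tilde\sigma=C(\rho)e^{(2-\gamma)\xi}$, which contradicts the statement. The sentence ``to match the statement we need $c(\xi)=1/\xi+1-\gamma$'' then assumes exactly what has to be proved.

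The slip is the term coming from the explicit $\xi$ inside the logarithm. Since $\log Y=-\xi^{-1}\log(\xi x/\sigma)$ with $x$ fixed,
\[
\partial_\xi\log Y=\frac{\log(\xi x/\sigma)}{\xi^{2}}-\frac{1}{\xi^{2}}=-\frac{\log Y}{\xi}-\frac{1}{\xi^{2}},
\]
so that term is $-\xi^{-2}$, not $-\xi^{-1}$. Consequently
$\partial_\xi\partial_\sigma\log f=-(1-Y)/(\xi^2\sigma)+Y\log Y/(\xi^2\sigma)+Y/(\xi^3\sigma)$.
Your moments $\mathbb{E}[Y]=1$ and $\mathbb{E}[Y\log Y]=1-\gamma$ then give $i_{\sigma\xi}=-\frac{1}{\xi^2\sigma}\bigl(1-\gamma+\frac{1}{\xi}\bigr)$, as in the appendix.

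Your planned cross-check confirms this. Push the matrix of Appendix~\ref{sec:GEV_Fisher} through the Jacobian of $(\sigma,\xi)\mapsto(\sigma/\xi,\sigma,\xi)$: every term in $p(\xi)$, $q(\xi)$ and $\Gamma(2+\xi)$ cancels, leaving both $i_{\sigma\sigma}=1/(\xi\sigma)^2$ and this $i_{\sigma\xi}$.

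The ODE is then $\partial_\xi\log\tilde\sigma=1-\gamma+1/\xi$, and your integration and Jacobian argument complete the proof. Your form of the ODE is the correct one. The paper's displayed intermediate equation $\partial_\xi\tilde\sigma=\frac{\tilde\sigma}{\xi}\bigl(1-\gamma+\frac{1}{\xi}\bigr)$ has a spurious factor $1/\xi$; it would integrate to $C\xi^{1-\gamma}e^{-1/\xi}$. The stated solution $C(\rho)\xi e^{(1-\gamma)\xi}$ instead solves $\partial_\xi\tilde\sigma=\tilde\sigma(1-\gamma+1/\xi)$.
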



\begin{remark}[The GP distribution]\label{rem:GPD} 
In \cite{chavez2005generalized}, an orthogonal  parametrisation  based on the framework of \cite{cox1987parameter} is employed for the $GP(\sigma,\xi)$ distribution: this is  $(\nu,\xi)\mapsto(\Tilde{\sigma}(\nu,\xi),\xi)$ with $\Tilde{\sigma}(\nu,\xi) = \nu/(\xi+1)$. 
Another orthogonal parametrisation can be obtained with transformed shape parameter $\xi$, given by  $(\sigma,\zeta)\mapsto(\sigma,\Tilde{\xi}(\sigma,\zeta))$, with $\Tilde\xi(\sigma,\zeta)=C_4(\zeta)-\log(\sigma)/2$, where $C_4(\zeta)$ is independent of $\sigma$.

It is interesting to note that, in the three-parameter $GP(\mu,\sigma,\xi)$ case, where $\mu$ is the lower bound of the distribution (or the threshold in the peaks over the threshold approach), the reparametrisation where the shape parameter is the parameter of interest $(\mu,\nu,\xi)\mapsto (\mu,\nu/(\xi+1),\xi)$ is orthogonal in the sense of \cite{cox1987parameter}; that is, the Fisher information between $\xi$ and $\nu$ and between $\xi$ and $\mu$ is zero.

The quantities involved in Fisher information of the GP distribution are recalled in Appendix~\ref{sec:fisher_gpd} and the proofs of the assertions of this remarks are deferred to Appendix~\ref{sec:appendix_proofs}.

\end{remark}

\begin{remark}[The three-parameter GEV distribution case]\label{rem:3-GEV} 

Our initial objective was to derive an orthogonal reparametrisation for the classical three-parameter GEV distribution. However, when moving from a two-parameter to a three-parameter model, the interactions among parameters generally become substantially more complex. This is notably the case for the GEV distribution, whose Fisher information matrix has a particularly intricate structure (see Appendix~\ref{sec:GEV_Fisher}). Moreover, in the presence of three parameters, achieving orthogonality requires transforming two parameters simultaneously, which leads to a system of coupled partial differential equations. For instance, in the GEV case, a reparametrisation that leaves the shape parameter unchanged, $(\nu,\rho,\xi) \mapsto (\Tilde{\mu}(\nu,\rho,\xi),\Tilde{\sigma}(\nu,\rho,\xi),\xi)$, needs to fulfil the equations
\begin{equation*}
    i_{\Tilde{\mu} \Tilde{\mu}} \frac{\partial \Tilde{\mu}(\nu,\rho,\xi)}{\partial \xi} + i_{\Tilde{\mu} \Tilde{\sigma}}\frac{\partial \Tilde{\sigma}(\nu,\rho,\xi)}{\partial \xi} = -i_{\xi \Tilde{\mu}} , \quad
 i_{\Tilde{\mu} \Tilde{\sigma}} \frac{\partial \Tilde{\mu}(\nu,\rho,\xi)}{\partial \xi} + i_{\Tilde{\sigma}\Tilde{\sigma}}\frac{\partial \Tilde{\sigma}(\nu,\rho,\xi)}{\partial \xi} = -i_{\xi \Tilde{\sigma}},
\end{equation*}
which can be rewritten as
\begin{equation*}
    (i_{\Tilde{\mu} \Tilde{\sigma}}^2 - i_{\Tilde{\sigma}\Tilde{\sigma}}i_{\Tilde{\mu} \Tilde{\mu}})\frac{\partial \Tilde{\sigma}(\nu,\rho,\xi)}{\partial \xi} = i_{\xi \Tilde{\sigma}}i_{\Tilde{\mu} \Tilde{\mu}} - i_{\xi \Tilde{\mu}}i_{\Tilde{\mu} \Tilde{\sigma}}, \quad
    (i_{\Tilde{\mu} \Tilde{\sigma}}^2 - i_{\Tilde{\sigma}\Tilde{\sigma}}i_{\Tilde{\mu} \Tilde{\mu}})\frac{\partial \Tilde{\mu}(\nu,\rho,\xi)}{\partial \xi} =  i_{\xi \Tilde{\mu}}i_{\Tilde{\sigma}\Tilde{\sigma}} - i_{\xi \Tilde{\sigma}}i_{\Tilde{\mu} \Tilde{\sigma}}.
\end{equation*}
In view of the expressions of the Fisher information for the GEV distribution given in Appendix~~\ref{sec:GEV_Fisher}, solving these equations appears very challenging, if not impossible, without any approximation.

As already stipulated decades ago by \cite{huzurbazar1956sufficient}, the problem of finding a orthogonal or partially orthogonal reparametrisation  is \textit{in general impossible, except in special cases} (such as the GP distribution). There are well-known connections between the GP and the GEV distributions and their parameters \citep[see][]{Coles2001}. Nevertheless, we find that for only one of them it is possible to derive an orthogonal parametrisation. While some details of the two distributions might be related, performing inference with finite-samples poses different challenges depending on whether the analysis is carried out under the peaks-over-threshold or the block maxima framework.

\end{remark}

\section{Simulation study}\label{sec:simu}
In this section, we illustrate through a simulation study the decorrelation power of the orthogonal parametrisation. 
We simulate $d = 1000$ independent replications of samples of size $n = 100$ for each distribution discussed in this paper, with true parameter values specified using the classical parametrisation. Maximum likelihood estimation is then performed using both parametrisations. The three simulation setups are as follow

\begin{itemize}
    \item in the $GEV_2$ case, the true parameter values are taken to be $(\sigma,\xi)=(1,0.2)$; the orthogonal parametrisation is $(\Tilde{\sigma}(\rho,\xi),\xi)$, using $C(\rho)=\rho$;
    \item in the GP case, the true parameter values are taken to be $(\sigma,\xi)=(1,0.2)$; the orthogonal parametrisations are $(\Tilde{\sigma}(\nu,\xi),\xi)$ using $C_3(\nu)=\nu$, and $(\sigma,\Tilde{\xi}(\sigma,\zeta))$ using $C_4(\zeta)=\zeta$;
    \item in the Gumbel case, the true parameter values are taken to be $(\mu,\sigma)=(1,1)$ the orthogonal parametrisations are $(\Tilde{\mu}(\nu,\sigma),\sigma)$ using $C_1(\nu)=\nu$, and $(\mu,\Tilde{\sigma}(\mu,\rho))$ using $C_2(\rho)=\rho$.
\end{itemize}
For each replication, we compute the cross-correlation between the estimated parameters and display their empirical distribution across the $d$ replications using violin plots (Figure~\ref{fig:cross-correlation}). In addition, we compute the correlation between the vectors of parameter estimates obtained over all replications. These results are reported at the top of Figure~\ref{fig:cross-correlation}.

Although these properties are theoretically valid in the asymptotic regime, Figure~\ref{fig:cross-correlation} shows that the dependence between parameter estimates decreases drastically when one of the proposed reparametrisations is used with a moderate sample size. It further indicates that the cross-correlations of the reparametrised estimates are generally concentrated around zero. The only notable exception occurs in the GP case when using the parametrisation $(\sigma,\Tilde{\xi}(\sigma,\zeta))$. This observation further corroborates the choice of \cite{chavez2005generalized} of using the reparametrisation for the  scale rather than the shape parameter.

\begin{figure}[ht!]
    \caption{Violin plots of the cross-correlation between parameter estimates for the two-parameter GEV distribution (left), the two-parameter GP distribution (middle), and the Gumbel distribution (right), under the different parametrisations considered. The cross-correlations are computed over $d=1000$ independent replications of samples of size $n=100$. 
    \label{fig:cross-correlation}}
\centering
  \includegraphics[width=.3\textwidth]{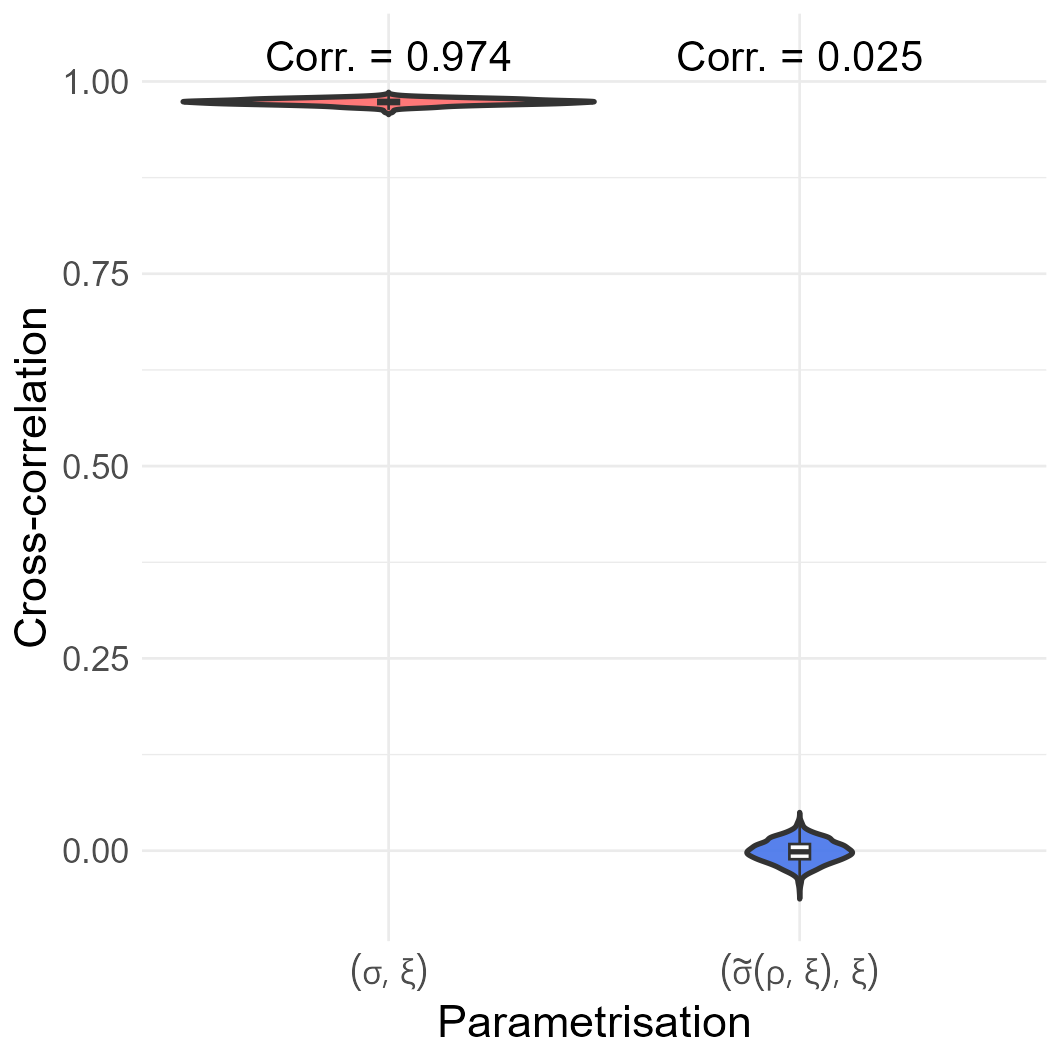}
  \hspace{0.4cm}
  \includegraphics[width=.3\textwidth]{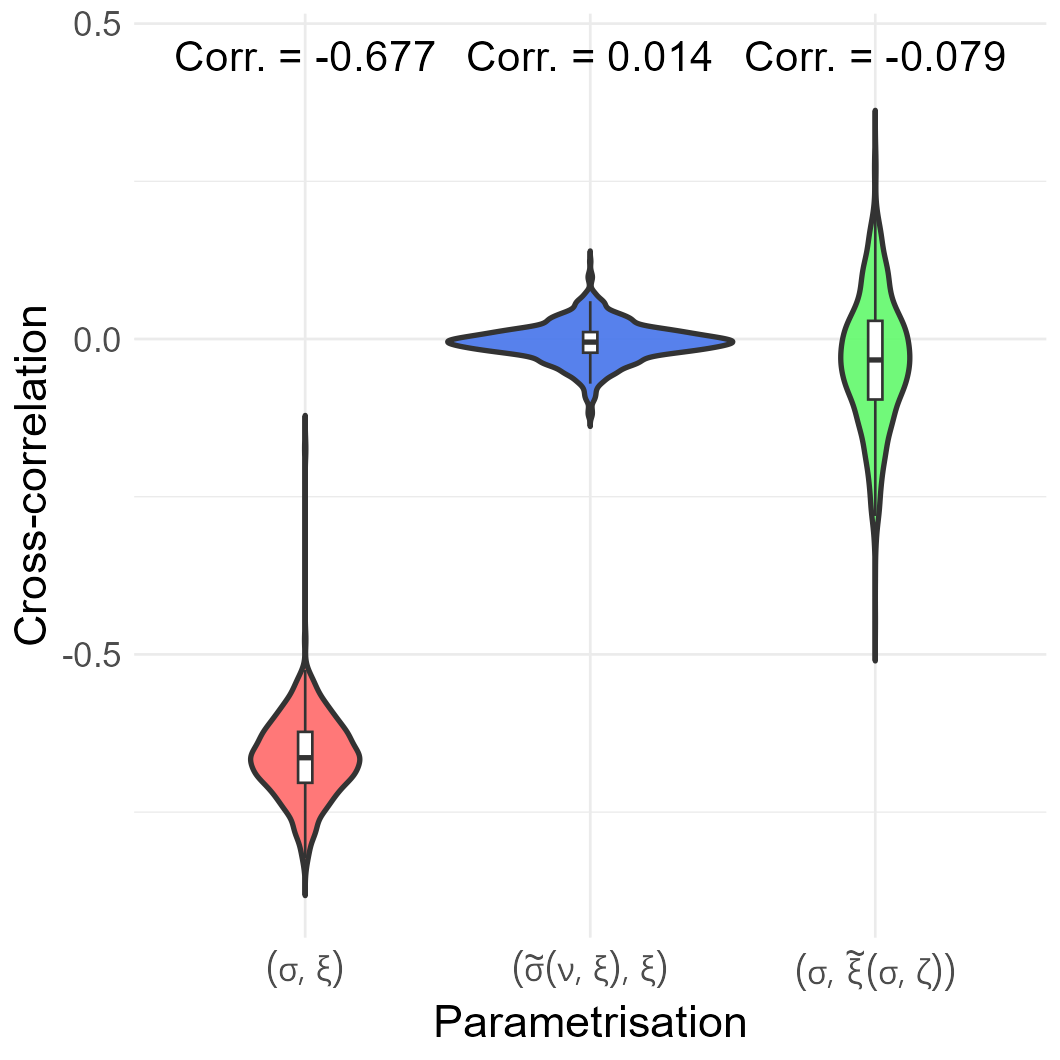}
  \hspace{0.4cm}
  \includegraphics[width=.3\textwidth]{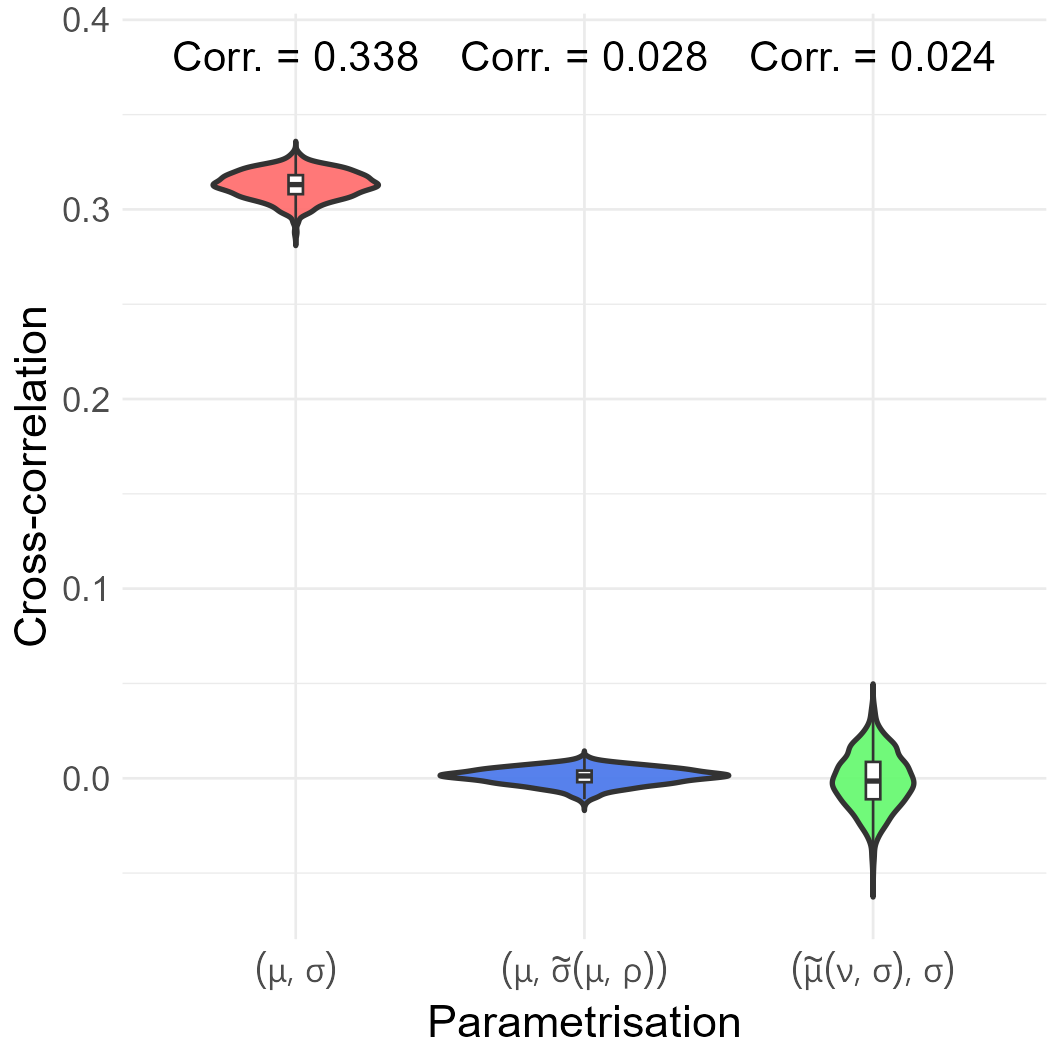}
\end{figure}




\section*{Funding}

This work was supported by DoE 2023-2027 (MUR, AIS.DIP.ECCELLENZA2023\_27.FF project). 


\section*{Data availability}
No data was used in the article. The code for the figures are available at \url{https://github.com/HuetNathan/reparametrisation_GEV}.








\bibliographystyle{abbrvnat}
\bibliography{biblio}
\clearpage
\appendix
\section{Fisher information matrices}

\subsection{Three-parameter GEV distribution}\label{sec:GEV_Fisher}

The Fisher information of the GEV distribution are derived in \cite{prescott1980maximum}. We recall them here for completeness
\begin{equation*}
i_{\mu \mu}=\frac{p(\xi)}{\sigma^2}, \quad i_{\sigma \sigma}=\frac{1}{\sigma^2 \xi^2}(1-2 \Gamma(2+\xi)+p(\xi))
\end{equation*}
\begin{equation*}
i_{\xi \xi}=\frac{1}{\xi^2}\left(\frac{\pi^2}{6}+\left(1-\gamma+\frac{1}{\xi}\right)^2-\frac{2 q(\xi)}{\xi}+\frac{p(\xi)}{\xi^2}\right), \quad i_{\mu \sigma}=-\frac{1}{\sigma^2 \xi}(p(\xi)-\Gamma(2+\xi)),
\end{equation*}
\begin{equation*}
i_{\mu \xi}=-\frac{1}{\sigma \xi}\left(q(\xi)-\frac{p(\xi)}{\xi}\right), \quad i_{\sigma \xi}=-\frac{1}{\sigma \xi^2}\left(1-\gamma+\frac{1-\Gamma(2+\xi)}{\xi}-q(\xi)+\frac{p(\xi)}{\xi}\right),
\end{equation*}
\begin{equation*}
p(\xi)=(1+\xi)^2 \Gamma(1+2 \xi), \quad q(\xi)=\Gamma(2+\xi)\left(\psi(1+\xi)+\xi^{-1}+1\right),
\end{equation*}
with $\Gamma$ the Gamma function, $\psi(z) = \Gamma'(z)/\Gamma(z)$ the digamma function and $\gamma$ the Euler-Mascheroni constant.

\subsection{Gumbel distribution}\label{sec:gumbel_fisher}

    The Fisher information of the $Gumbel(\mu,\sigma)$ distribution are
\begin{equation*}
    i_{\mu\mu} = 1/\sigma^2; \quad i_{\sigma \sigma} = (\pi^2/6+\gamma^2 - 2 \gamma +1)/\sigma^2; \quad i_{\mu \sigma} = (\gamma - 1)/\sigma^2,
\end{equation*}
for $\mu \in \mb{R}$, and $\sigma>0$.

\begin{proof}
    To obtain the Fisher information, we need to compute
    \begin{equation*}
        i_{\mu\mu} = \mb{E} \Big[\Big(\frac{\partial \log(f^{Gumbel}(X;\mu,\sigma))}{\partial \mu}\Big)^2\Big]; \quad i_{\sigma\sigma} = -\mb{E} \Big[\Big(\frac{\partial \log(f^{Gumbel}(X;\mu,\sigma))}{\partial \sigma}\Big)^2\Big];
        \end{equation*}
        \begin{equation*}\label{eq:info_gev}
        i_{\mu\sigma} = -\mb{E} \Big[\frac{\partial \log(f^{Gumbel}(X;\mu,\sigma))}{\partial \mu}\frac{\partial \log(f^{Gumbel}(X;\mu,\sigma))}{\partial \sigma}\Big],
    \end{equation*}
where $X \sim Gumbel(\mu,\sigma)$.

\noindent The scores of the Gumbel distribution are
    \begin{equation*}
        \frac{\partial \log(f^{Gumbel}(X;\mu,\sigma))}{\partial \mu} = \frac{1}{\sigma}\Big(1-\exp\Big(-\frac{x-\mu}{\sigma}\Big)\Big),
    \end{equation*}
    \begin{equation*}
        \frac{\partial \log(f^{Gumbel}(X;\mu,\sigma))}{\partial \sigma} = \frac{x-\mu}{\sigma^2}\Big(1-\exp\Big(-\frac{x-\mu}{\sigma}\Big)\Big)-\frac{1}{\sigma}.
    \end{equation*}
    Notice that, if $X\sim Gumbel(\mu,\sigma)$, for all $k,n\in \mb{N}$
    \begin{equation*}
        \mb{E}\Big[\Big(\frac{X-\mu}{\sigma}\Big)^n \exp\Big(-k\Big(\frac{X-\mu}{\sigma}\Big)\Big)\Big] 
        = (-1)^n\int_0^{+\infty} y^k \log^n(y)\exp(-y)dy = (-1)^n\Gamma^{(n)}(k+1).
    \end{equation*}
    Finally, using this identity, the expressions of the scores and the tabulated values of the Gamma function and its derivatives gives the result.
\end{proof}


\subsection{Two-parameter GEV distribution}\label{sec:GEV2_fisher}
The Fisher information of the $GEV_2(\sigma,\xi)$ distribution are
    \begin{equation*}
        i_{\sigma\sigma} = \Big(\frac{1}{\xi \sigma}\Big)^2, \quad\quad i_{\sigma\xi} =-\frac{1-\gamma}{\sigma\xi^2}-\frac{1}{\sigma\xi^3}, \quad\quad i_{\xi\xi} = \frac{1}{\xi^2}\Big(\frac{\pi^2}{6}+1-2\gamma+\gamma^2\Big)+\frac{2(1-\gamma)}{\xi^3}+\frac{1}{\xi^4},
    \end{equation*}
for $\xi \ne 0$, and $\sigma>0$.

\begin{proof}[Proof]
     Assume $\xi > 0$ for sake of clarity (the case $\xi <0$ uses exactly the same maths). To obtain the Fisher information, we need to compute
    \begin{equation*}
        i_{\sigma\sigma} = \mb{E} \Big[\Big(\frac{\partial \log(f^{GEV_2}(X;\sigma,\xi))}{\partial \sigma}\Big)^2\Big]; \quad i_{\xi\xi} = -\mb{E} \Big[\Big(\frac{\partial \log(f^{GEV_2}(X;\sigma,\xi))}{\partial \xi}\Big)^2\Big];
        \end{equation*}
        \begin{equation*}
        i_{\xi\sigma} = -\mb{E} \Big[\frac{\partial \log(f^{GEV_2}(X;\sigma,\xi))}{\partial \sigma}\frac{\partial \log(f^{GEV_2}(X;\sigma,\xi))}{\partial \xi}\Big],
    \end{equation*}
    where $X\sim GEV_2(\sigma,\xi)$.
    
    \noindent The scores of the $GEV_2$ distribution are

    \begin{equation*}
        \frac{\partial\log(f^{GEV_2}(x;\sigma,\xi))}{\partial \sigma} = \frac{1}{\xi\sigma}\Big(1-\Big(\frac{\xi x}{\sigma}\Big)^{-1/\xi}\Big),
    \end{equation*}

    \begin{equation*}
        \frac{\partial\log(f^{GEV_2}(x;\sigma,\xi))}{\partial \xi} = \frac{1}{\xi^2}\Big(\log\Big(\frac{\xi x}{\sigma}\Big)-1\Big)\Big(1-\Big(\frac{\xi x}{\sigma}\Big)^{-1/\xi}\Big)-\frac{1}{\xi},
    \end{equation*}
for $x> 0$ if $\xi >0$, and for $x<0$ if $\xi <0$.
    
\noindent Notice that, if $X\sim GEV_2(\sigma,\xi)$, for all $k,n\in \mb{N}$
\begin{equation*}
    \mb{E}\Big[\Big(\frac{\xi X}{\sigma}\Big)^{-k/\xi}\log^n\Big(\frac{\xi x}{\sigma}\Big)\Big] 
    =(-\xi)^n\int_0^\infty y^k \log^n(y) \exp(-y) dy = (-\xi)^n\Gamma^{(n)}(k+1).
\end{equation*}
Finally, using this identity, the expressions of the scores given and the tabulated values of the Gamma function and its derivatives gives the results.

\end{proof}

\subsection{GP distribution} \label{sec:fisher_gpd}
The Fisher information matrix for the two-parameter GP distribution was derived in \cite{smith1984threshold}, and expressions for the three-parameter case are given in \cite{taylor2019clustering}. We recall them here for completeness
\begin{equation*}
    i_{\sigma\sigma}=\frac{1}{\sigma^2(1+2\xi)}; \quad i_{\xi\xi}=\frac{2}{(1+\xi)(1+2\xi)}; \quad i_{\sigma\xi}=\frac{1}{\sigma(1+\xi)(1+2\xi)}.
\end{equation*}

\begin{equation*}
    i_{\mu\mu}=\frac{(\xi+1)^2}{\sigma^2(1+2\xi)}; \quad i_{\mu\sigma}=-\frac{\xi}{\sigma^2(1+2\xi)}; \quad i_{\mu\xi}=-\frac{\xi}{\sigma(1+\xi)(1+2\xi)}.
\end{equation*}

Because the all elements of Fisher information are independent of $\mu$, the components $i_{\sigma\sigma},i_{\xi\xi}$ and $i_{\sigma\xi}$ are the same in the three-parameter or two-parameter cases.

\section{Proofs of Section~\ref{sec:main}}\label{sec:appendix_proofs}

\begin{proof}[Proof of Theorem~\ref{thm:Gumbel}]
According to Equation~\eqref{eq:orthogonal_pde}, a reparametrisation with a transformed location parameter $(\nu,\sigma) \mapsto (\Tilde{\mu}(\nu,\sigma),\sigma)$, needs to fulfil
\begin{equation*}
    i_{\Tilde{\mu}\Tilde{\mu}} \frac{\partial \Tilde{\mu}(\nu,\sigma)}{\partial \sigma}= - i_{\Tilde{\mu}\sigma}.
\end{equation*}
Given the Fisher information in Appendix~\ref{sec:gumbel_fisher}, this equation rewrites as
\begin{equation*}
    \frac{\partial \Tilde{\mu}(\nu,\sigma)}{\partial \sigma} = 1- \gamma.
\end{equation*}
A solution of this equation is given by
\begin{equation*}
\Tilde{\mu}(\nu,\sigma) = (1- \gamma)\sigma + C_1(\nu),
\end{equation*}
where $C_1(\nu)$ is independent of $\sigma$.

Similarly, a reparametrisation with a transformed scale  $(\mu,\rho) \mapsto (\mu,\Tilde{\sigma}(\mu,\rho))$, needs to fulfil
\begin{equation*}
    i_{\Tilde{\sigma}\Tilde{\sigma}} \frac{\partial \Tilde{\sigma}(\mu,\rho)}{\partial \mu}= - i_{\mu\Tilde{\sigma}}.
\end{equation*}
Given the Fisher information in Appendix~\ref{sec:gumbel_fisher}, this equation rewrites as
\begin{equation*}
    \frac{\partial \Tilde{\sigma}(\mu,\rho)}{\partial \mu} = \frac{1- \gamma}{\pi^2/6+\gamma^2 - 2 \gamma +1}.
\end{equation*}
A solution of this equation is given by
\begin{equation*}
\Tilde{\sigma}(\mu,\rho) = \frac{1- \gamma}{2c_1 -2\gamma+1}\mu + C_2(\rho),
\end{equation*}
where $C_2(\rho)$ is independent of $\mu$.
\end{proof}

\begin{proof}[Proof of Theorem~\ref{thm:2-GEV}]
    According to Equation~\eqref{eq:orthogonal_pde}, a reparametrisation with a transformed location parameter $(\sigma,\xi) \mapsto (\Tilde{\sigma}(\rho,\xi),\xi)$, needs to fulfil
    \begin{equation*}
        i_{\Tilde{\sigma}\Tilde{\sigma}} \frac{\partial \Tilde{\sigma}(\rho,\xi)}{\partial \xi}= - i_{\Tilde{\sigma}\xi}.
    \end{equation*}
   Given the Fisher information in Appendix~\ref{sec:GEV2_fisher}, this equation rewrites as
    \begin{equation*}
        \frac{\partial \Tilde{\sigma}(\rho,\xi)}{\partial \xi}=  \frac{\Tilde{\sigma}(\rho,\xi)}{\xi}\Big(1-\gamma+\frac{1}{\xi}\Big).
    \end{equation*}
    A solution of this equation is given by
    \begin{equation*}
        \Tilde{\sigma}(\rho,\xi) = C(\rho)\xi\exp\big((1-\gamma)\xi\big),
    \end{equation*}
    where $C(\rho)$ is independent of $\xi$.
\end{proof}

\begin{proof}[Proof of Remark~\ref{rem:GPD}]

\textit{The two-parameter GP distribution.} According to Equation~\eqref{eq:orthogonal_pde}, a reparametrisation with a transformed scale parameter $(\nu,\xi)\mapsto(\Tilde{\sigma}(\nu,\xi),\xi)$ needs to fulfil
\begin{equation*}
i_{\Tilde\sigma\Tilde\sigma}\frac{\partial\Tilde\sigma(\nu,\xi)}{\partial \xi} = -i_{\Tilde\sigma\xi}.
\end{equation*}
Given the Fisher information in Appendix~\ref{sec:fisher_gpd}, this equation rewrites as
\begin{equation*}
\frac{\partial\Tilde\sigma(\nu,\xi)}{\partial \xi} = -\frac{\Tilde\sigma(\nu,\xi)}{1+\xi}.
\end{equation*}
A solution of this equation is given by
\begin{equation*}
    \Tilde{\sigma}(\nu,\xi) = C_3(\nu)/(\xi+1)
\end{equation*}
where $C_3(\nu)$ is independent of $\xi$.

Similarly, a reparametrisation with transformed shape parameter $(\sigma,\zeta)\mapsto(\sigma,\Tilde{\xi}(\sigma,\xi))$ needs to fulfil
\begin{equation*}
    \frac{\partial\Tilde\xi(\sigma,\zeta)}{\partial \sigma} = -\frac{1}{2\sigma}.
\end{equation*}
A solution of this equation is given by
\begin{equation*}
    \Tilde\xi(\sigma,\zeta) = C_3(\zeta)-\log(\sigma)/2,
\end{equation*}
where $C_3(\zeta)$ is independent of $\sigma$.

\textit{The three-parameter GP distribution.} 
According to Equation~\eqref{eq:orthogonal_pde}, a reparametrisation with a transformed location and scale parameters $(\mu,\nu,\xi)\mapsto(\Tilde{\mu}(\mu,\nu,\xi),\Tilde{\sigma}(\mu,\nu,\xi),\xi)$ needs to fulfil
\begin{equation*}
    i_{\Tilde{\mu} \Tilde{\mu}} \frac{\partial \Tilde{\mu}(\mu,\nu,\xi)}{\partial \xi} + i_{\Tilde{\mu} \Tilde{\sigma}}\frac{\partial \Tilde{\sigma}(\mu,\nu,\xi)}{\partial \xi} = -i_{\xi \Tilde{\mu}} , \mbox{ and }\quad
 i_{\Tilde{\mu} \Tilde{\sigma}} \frac{\partial \Tilde{\mu}(\mu,\nu,\xi)}{\partial \xi} + i_{\Tilde{\sigma}\Tilde{\sigma}}\frac{\partial \Tilde{\sigma}(\mu,\nu,\xi)}{\partial \xi} = -i_{\xi \Tilde{\sigma}},
\end{equation*}
resulting in the equations
\begin{equation*}
    (i_{\Tilde{\mu} \Tilde{\sigma}}^2 - i_{\Tilde{\sigma}\Tilde{\sigma}}i_{\Tilde{\mu} \Tilde{\mu}})\frac{\partial \Tilde{\sigma}(\mu,\nu,\xi)}{\partial \xi} = i_{\xi \Tilde{\sigma}}i_{\Tilde{\mu} \Tilde{\mu}} - i_{\xi \Tilde{\mu}}i_{\Tilde{\mu} \Tilde{\sigma}}, \mbox{ and }\quad
    (i_{\Tilde{\mu} \Tilde{\sigma}}^2 - i_{\Tilde{\sigma}\Tilde{\sigma}}i_{\Tilde{\mu} \Tilde{\mu}})\frac{\partial \Tilde{\mu}(\mu,\nu,\xi)}{\partial \xi} =  i_{\xi \Tilde{\mu}}i_{\Tilde{\sigma}\Tilde{\sigma}} - i_{\xi \Tilde{\sigma}}i_{\Tilde{\mu} \Tilde{\sigma}}.
\end{equation*}
Given the Fisher information given in Appendix~\ref{sec:fisher_gpd}, these equations rewrite as
\begin{equation*}
\frac{\partial\Tilde\sigma(\mu,\nu,\xi)}{\partial \xi} = -\frac{\Tilde\sigma(\mu,\nu,\xi)}{1+\xi}, \mbox{ and }\quad \frac{\partial\Tilde\mu(\mu,\nu,\xi)}{\partial \xi} = 0,
\end{equation*}
Solutions of these equations are given by
\begin{equation*}
\Tilde\sigma(\mu,\nu,\xi) = C_5(\mu,\nu)/(\xi+1), \mbox{ and} \quad \Tilde\mu(\mu,\nu,\xi) = C_6(\mu,\nu),
\end{equation*}
where $C_5(\mu,\nu)$ and $C_6(\mu,\nu)$ are independent of $\xi$.
\end{proof}
\end{document}